\documentclass[11pt]{amsart}

\author{Carlo Sanna}
\address{Universit\`a degli Studi di Torino\\Department of Mathematics\\Turin, Italy}
\email{carlo.sanna.dev@gmail.com}
\urladdr{\url{http://orcid.org/0000-0002-2111-7596}}

\keywords{Linear recurrences; greatest common divisor; divisibility}
\subjclass[2010]{Primary: 11B37. Secondary: 11A07, 11B39, 11N25.}
\title{On numbers $n$ relatively prime to the $n$th term of a linear recurrence}

\usepackage{amsmath}
\usepackage{amssymb}
\usepackage{amsthm}
\usepackage{geometry}
\geometry{left=1.15in, right=1.15in, top=.72in, bottom=.72in}
\usepackage{color}
\usepackage{hyperref}
\usepackage{enumerate}
\hypersetup{colorlinks=true}

\newtheorem{thm}{Theorem}[section]

\newtheorem{lem}[thm]{Lemma}

\uchyph=0

\begin{document}

\begin{abstract}
Let $(u_n)_{n \geq 0}$ be a nondegenerate linear recurrence of integers, and let $\mathcal{A}$ be the set of positive integers $n$ such that $u_n$ and $n$ are relatively prime.
We prove that $\mathcal{A}$ has an asymptotic density, and that this density is positive unless $(u_n / n)_{n \geq 1}$ is a linear recurrence.
\end{abstract}

\maketitle

\section{Introduction}

Let $(u_n)_{n \geq 0}$ be a linear recurrence over the integers, that is, $(u_n)_{n \geq 0}$ is a sequence of integers satisfying
\begin{equation*}
u_n = a_1 u_{n - 1} + a_2 u_{n - 2} + \cdots + a_k u_{n - k} ,
\end{equation*}
for all integers $n \geq k$, where $a_1, \ldots, a_k \in \mathbf{Z}$ and $a_k \neq 0$.
To avoid trivialities, we assume that $(u_n)_{n \geq 0}$ is not identically zero.
We refer the reader to \cite[Ch.~1-8]{MR1990179} for the general terminology and theory of linear recurrences.

The set
\begin{equation*}
\mathcal{B}_u := \{n \in \mathbf{N} : n \mid u_n \}
\end{equation*}
has been studied by several researchers.
Assuming that $(u_n)_{n \geq 0}$ is nondegenerate and that its characteristic polynomial has only simple roots, Alba Gonz\'alez, Luca, Pomerance, and Shparlinski~\cite[Theorem~1.1]{MR2928495} proved that
\begin{equation*}
\#\mathcal{B}_u(x) \ll_k \frac{x}{\log x} ,
\end{equation*}
for all sufficiently large $x > 1$.
Andr\'{e}-Jeannin~\cite{MR1131414} and Somer~\cite{MR1271392} studied the arithmetic properties of the elements of $\mathcal{B}_u$ when $(u_n)_{n \geq 0}$ is a Lucas sequence, that is, $(u_0, u_1, k)  = (0, 1, 2)$.
In such a case, generalizing a previous result of Luca and Tron~\cite{MR3409327}, Sanna~\cite{MR3606950} proved the upper bound
\begin{equation*}
\#\mathcal{B}_u(x) \leq x^{1 - \left(\frac1{2} + o(1)\right) \log \log \log x / \log \log x} ,
\end{equation*}
as $x \to +\infty$, where the $o(1)$ depends on $a_1$ and $a_2$.
Furthermore, Corvaja and Zannier~\cite{MR1918678} studied the more general set
\begin{equation*}
\mathcal{B}_{u,v} := \{n \in \mathbf{N} : v_n \mid u_n \} ,
\end{equation*}
where $(v_n)_{n \geq 0}$ is another linear recurrence over the integers.
Under some mild hypotheses on $(u_n)_{n \geq 0}$ and $(v_n)_{n \geq 0}$, they proved that $\mathcal{B}_{u,v}$ has zero asymptotic density~\cite[Corollary~2]{MR1918678}, while Sanna~\cite{San_preprint} gave the bound
\begin{equation*}
\#\mathcal{B}_{u,v}(x) \ll_{u,v} x \cdot \left(\frac{\log \log x}{\log x}\right)^{h_{u,v}} ,
\end{equation*}
for all $x \geq 3$, where $h_{u,v}$ is a positive integer depending only on $(u_n)_{n \geq 0}$ and $(v_n)_{n \geq 0}$.

If $(F_n)_{n \geq 0}$ is the sequence of Fibonacci numbers, Leonetti and Sanna~\cite{LS_preprint} showed that the set
\begin{equation*}
\mathcal{G} := \{\gcd(n, F_n) : n \in \mathbf{N} \}
\end{equation*}
has zero asymptotic density, and that
\begin{equation*}
\#\mathcal{G}(x) \gg \frac{x}{\log x} ,
\end{equation*}
for all $x \geq 2$.

On the other hand, the set
\begin{equation*}
\mathcal{A}_u = \{n \in \mathbf{N} : \gcd(n, u_n) = 1\}
\end{equation*}
does not seem to have attracted so much attention.
We prove the following result:

\begin{thm}\label{thm:main}
For any nondegenerate linear recurrence $(u_n)_{n \geq 0}$, the asymptotic density $\mathbf{d}(\mathcal{A}_u)$ of $\mathcal{A}_u$ exists.
Moreover, if $(u_n / n)_{n \geq 1}$ is not a linear recurrence then $\mathbf{d}(\mathcal{A}_u) > 0$.
Otherwise, $\mathcal{A}_u$ is finite and, a fortiori, $\mathbf{d}(\mathcal{A}_u) = 0$.
\end{thm}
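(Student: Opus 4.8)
The plan is to recast $\gcd(n,u_n)=1$ as a sieve condition and then separate the two cases of the statement. Write $u_n=\sum_{i}P_i(n)\gamma_i^{\,n}$ in its exponential-polynomial form over a number field $K$ (the $\gamma_i$ being the distinct characteristic roots and the $P_i$ polynomials), and set $I:=\{i:P_i(0)\neq 0\}$. Note first that $\mathcal{A}_u=\mathbf{N}\setminus\bigcup_p S_p$, where $S_p:=\{n:p\mid\gcd(n,u_n)\}$. Excluding the finitely many primes dividing $a_k$, ramifying in $K$, or for which two of the $\gamma_i$ collide modulo a prime above $p$, the sequence $(u_n\bmod p)$ is purely periodic; writing $n=pm$ and reducing modulo a prime $\mathfrak{p}\mid p$, the congruence $pm\equiv 0$ together with the Frobenius shows that $u_{pm}\bmod\mathfrak{p}$ is, up to a Frobenius twist of its coefficients, the reduction modulo $\mathfrak{p}$ of the fixed sequence $\tilde u_m:=\sum_{i\in I}P_i(0)\gamma_i^{\,m}$. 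Hence for all but finitely many $p$ the set $S_p$ is a union of arithmetic progressions, with modulus coprime to $p$, so that $\rho_p:=\mathbf{d}(S_p)=\tfrac1p\cdot\dfrac{z(p)}{\pi(p)}$, where $\pi(p)$ and $z(p)$ are the period and the number of zeros in one period of the (twisted) reduction of $\tilde u$ modulo $\mathfrak{p}$.

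Next I would dispose of the degenerate case. If $I=\varnothing$, then $P_i(0)=0$ for all $i$, so $u_n=n\,v_n$ with $v_n:=\sum_i(P_i(n)/n)\gamma_i^{\,n}$ a linear recurrence over $\mathbf{Q}$; conversely, comparing exponential-polynomial representations shows that $(u_n/n)_{n\ge1}$ being a linear recurrence forces $P_i(0)=0$ for every $i$. In that case $\gcd(n,u_n)=1$ would require the $p$-adic valuation of $v_n$ to equal $-\nu_p(n)$ for every prime $p\mid n$, whereas the recurrence satisfied by $(v_n)$, together with $v_n=u_n/n\in\tfrac1n\mathbf{Z}$, keeps this valuation bounded below by $-O(\log\nu_p(n))$ at the finitely many primes that can occur in a denominator of $(v_n)$; a routine comparison then bounds every prime factor of $n$ and its exponent, so $\mathcal{A}_u$ is finite. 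If $|I|=1$, say $I=\{i_1\}$, then $\tilde u_m=P_{i_1}(0)\gamma_{i_1}^{\,m}$ never vanishes modulo $\mathfrak{p}$, so $\rho_p=0$ for all large $p$, and the remaining work is subsumed in the next case.

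Assume now $|I|\ge 2$, so that $\tilde u$ is itself a \emph{nondegenerate} linear recurrence of order $\le k$ with simple characteristic roots (the ratios $\gamma_i/\gamma_j$ with $i,j\in I$ are not roots of unity, being ratios of roots of the nondegenerate $u$). The heart of the matter is then the estimate
\begin{equation*}
\sum_{p}\rho_p=\sum_p\frac{z(p)}{p\,\pi(p)}<\infty .
\end{equation*}
Granting it, existence of $\mathbf{d}(\mathcal{A}_u)$ follows by squeezing $\mathcal{A}_u$ between the eventually periodic sets $\mathcal{A}_u^{(y)}:=\mathbf{N}\setminus\bigcup_{p\le y}S_p$, whose densities decrease to some $d_\infty$, and observing that the discrepancy has upper density at most $\overline{\mathbf{d}}\bigl(\bigcup_{p>y}S_p\bigr)\le\sum_{p>y}\rho_p\to 0$; and $\mathbf{d}(\mathcal{A}_u)=d_\infty>0$ follows from inclusion--exclusion combined with the near-independence of the residue classes cutting out $S_p$ for distinct $p$ (a standard second-moment/sieve consequence of $\sum_p\rho_p<\infty$), which gives $d_\infty\ge\prod_p(1-c\,\rho_p)>0$.

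So the single genuine obstacle is the displayed convergence. Here I would use that, $\tilde u$ being nondegenerate with at least two roots, at most one of its characteristic roots is a root of unity; choosing $\gamma_a$ ($a\in I$) not a root of unity gives $\pi(p)\ge\operatorname{ord}_{\mathfrak{p}}(\gamma_a)\to\infty$. It then suffices to control $z(p)$ — the number of zeros in a full period of a nonzero linear recurrence of bounded order modulo $\mathfrak{p}$ — by a fixed power of $\log p$ (a weaker control of $\sum_p z(p)/(p\,\pi(p))$ of the type obtained en route to \cite{MR2928495}, whose simple-root hypothesis $\tilde u$ satisfies, would already suffice), and then to sum over $p$ by grouping according to the value $d$ of $\operatorname{ord}_{\mathfrak{p}}(\gamma_a)$: the primes contributing to a given $d$ come from primitive prime divisors of $\gamma_a^{\,d}-1$, whose norms lie in $\{jd+1:j\ge1\}$ with $O(1)$ prime ideals of each norm and $\ll d/\log d$ in all, which makes $\sum_d\frac1d\sum_{\operatorname{ord}_{\mathfrak{p}}(\gamma_a)=d}p_{\mathfrak{p}}^{-1}$ converge. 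Establishing the bound on $z(p)$ — plausibly by a differencing/induction argument on the number of roots, in which the hypothesis that every ratio of distinct roots has multiplicative order tending to infinity prevents the "degenerate collapses" of subsums that would be needed to retain many zeros, except for finitely many $p$ — is the one genuinely technical step.
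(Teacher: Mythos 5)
Your skeleton matches the paper's (reduce to the auxiliary recurrence $\tilde u_m=\sum_{i\in I}P_i(0)\gamma_i^{\,m}$, which is exactly the paper's integer sequence $w$; dispose of the degenerate case by integrality; sieve small versus large primes using eventual periodicity modulo $p$), but the two quantitative steps you treat as routine are where the whole difficulty sits, and as written both fail. For existence, the inequality $\overline{\mathbf{d}}\bigl(\bigcup_{p>y}S_p\bigr)\le\sum_{p>y}\rho_p$ is unjustified: upper density is not countably subadditive, and knowing the asymptotic density $\rho_p$ of the periodic set $S_p$ says nothing about $\#(S_p\cap[1,x])$ when $x$ is small compared with the relevant modulus $p\cdot\pi(p)$, which can be of size about $p^{k+1}$. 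A priori the zeros could cluster in an initial segment of the period, making $\#(S_p\cap[1,x])/x$ of order $1/p$ in the ranges that matter, and $\sum_{y<p\le x}1/p$ does not tend to $0$. What is actually needed is a count uniform in $x$ with error $O(1)$ per prime; this is precisely the content of the paper's Lemma~\ref{lem:upmcong} (a simple-root recurrence mod $p$ has boundedly many zeros in every window of length $T_w(p)$), combined with Lemma~\ref{lem:Pugamma} to show that primes with $T_w(p)<p^{1/(k+1)}$ are rare; the per-prime error $O(1)$ then totals $O(x/\log x)$. Your hoped-for substitute $z(p)\ll(\log p)^{O(1)}$ is moreover doubtful for order at least $3$ (the number of zeros in a full period can be polynomially large in $p$), and even granting it the per-prime error $z(p)$ would sum to something not $o(x)$.

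For positivity, the bound $d_\infty\ge\prod_p(1-c\,\rho_p)$ ``by near-independence'' is an assertion, not a consequence of $\sum_p\rho_p<\infty$: the sets $S_p$ are unions of residue classes to moduli $p\cdot\pi(p)$ that are not pairwise coprime, and genuine correlations occur (for instance when a prime $q$ divides the period of $u$ modulo $p$, the events $q\mid n$ and $p\mid u_n$ are correlated), so a second-moment or sieve lower bound would require pair-correlation estimates you never supply. Nor can one bypass this with the easy observation $S_p\subseteq p\mathbf{N}$, which gives $\mathbf{d}\bigl(\bigcup_{p\le y}S_p\bigr)\le 1-\prod_{p\le y}(1-1/p)\le 1-c_1/\log y$: bare convergence of $\sum_p\rho_p$ only shows the tail over $p>y$ is $o(1)$, and it could decay more slowly than $1/\log y$, so the comparison does not close. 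The paper closes it precisely because the $T_w$-machinery yields the quantitative tail bound $O(y^{-1/(k+1)})$, which beats $c_1/\log y$ at a fixed large $y$. Finally, your degenerate case is vaguer than necessary: since $P_i(0)=0$ for all $i$, the quantity $B u_n/n$ (with $B$ clearing denominators of the coefficient polynomials) is simultaneously rational and an algebraic integer, hence an integer, so $n\mid Bu_n$ and every $n\in\mathcal{A}_u$ divides $B$; no valuation-growth comparison is needed.
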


We remark that given the initial conditions and the coefficients of a linear recurrence $(u_n)_{n \geq 0}$, it is easy to test effectively if $(u_n / n)_{n \geq 1}$ is a linear recurrence or not (see Lemma~\ref{lem:unovern}, in \S\ref{sec:preliminaries}).

\subsection*{Notation}

Throughout, the letter $p$ always denotes a prime number.
For a set of positive integers $\mathcal{S}$, we put $\mathcal{S}(x):=\mathcal{S}\cap [1,x]$ for all $x\ge 1$, and we recall that the asymptotic density $\mathbf{d}(\mathcal{S})$ of $\mathcal{S}$ is defined as the limit of the ratio $\#\mathcal{S}(x) / x$ as $x \to +\infty$, whenever this exists.
We employ the Landau--Bachmann ``Big Oh'' and ``little oh'' notations $O$ and $o$, as well as the associated Vinogradov symbols $\ll$ and $\gg$, with their usual meanings.
Any dependence of the implied constants is explicitly stated or indicated with subscripts.

\section{Preliminaries}\label{sec:preliminaries}

In this section we give some definitions and collect some preliminary results needed in the later proofs.
Let $f_u$ be the characteristic polynomial of $(u_n)_{n \geq 0}$, i.e.,
\begin{equation*}
f_u(X) = X^k - a_1 X^{k - 1} - a_2 X^{k - 2} - \cdots - a_k .
\end{equation*}
Moreover, let $\mathbf{K}$ be the splitting field of $f_u$ over $\mathbf{Q}$, and let $\alpha_1, \ldots, \alpha_r \in \mathbf{K}$ be all the distinct roots of $f_u$.
It is well known that there exist $g_1, \ldots, g_r \in \mathbf{K}[X]$ such that
\begin{equation}\label{equ:genpowsum}
u_n = \sum_{i = 1}^r g_i(n)\;\! \alpha_i^n ,
\end{equation}
for all integers $n \geq 0$.
We define $B_u$ as the smallest positive integer such that all the coefficients of the polynomials $B_u g_1, \ldots, B_u g_r$ are algebraic integers.

We have the following easy lemma.

\begin{lem}\label{lem:unovern}
$\{u_n / n\}_{n \geq 1}$ is a linear recurrence if and only if 
\begin{equation}\label{equ:gizero}
g_1(0) = \cdots = g_r(0) = 0 .
\end{equation}
In such a case, $\mathcal{A}_u$ is finite.
\end{lem}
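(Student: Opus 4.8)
The plan is to prove the two implications and the finiteness statement by unwinding the polynomial-exponential expansion \eqref{equ:genpowsum}. For the ``if'' direction I would assume \eqref{equ:gizero}, write $g_i(X) = X\,h_i(X)$ with $h_i \in \mathbf{K}[X]$, and observe that then
\begin{equation*}
\frac{u_n}{n} = \sum_{i = 1}^{r} h_i(n)\,\alpha_i^n \qquad (n \geq 1) .
\end{equation*}
The right-hand side is a polynomial-exponential sequence whose bases lie among the roots $\alpha_1, \dots, \alpha_r$ of $f_u$, hence it satisfies the linear recurrence with characteristic polynomial $\bigl(\prod_{i = 1}^{r}(X - \alpha_i)\bigr)^{D}$, where $D := 1 + \max_i \deg h_i$; since $\{\alpha_1, \dots, \alpha_r\}$ is Galois-stable and consists of algebraic integers, this polynomial is monic and lies in $\mathbf{Z}[X]$. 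Thus $\{u_n / n\}_{n \geq 1}$ is a linear recurrence.

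For the converse I would write a polynomial-exponential expansion $u_n / n = \sum_j q_j(n)\,\beta_j^n$, valid for all large $n$, with the $\beta_j$ distinct and nonzero and $q_j \in \overline{\mathbf{Q}}[X]$, multiply through by $n$ to get $u_n = \sum_j \bigl(n\,q_j(n)\bigr)\beta_j^n$ for all large $n$, compare with \eqref{equ:genpowsum}, and invoke the uniqueness of the representation of a linear recurrence as a polynomial-exponential sum (see, e.g., \cite[Ch.~1-8]{MR1990179}): for matched bases $\alpha_i = \beta_j$ this gives $g_i(n) = n\,q_j(n)$, and in particular $g_i(0) = 0$ for every $i$ (trivially so for those $i$ with $g_i \equiv 0$), which is \eqref{equ:gizero}.

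For the finiteness claim I would set $w_n := u_n / n = \sum_i h_i(n)\,\alpha_i^n$ and note that $B_u\,h_i(X) = \bigl(B_u\,g_i(X)\bigr)/X$ has algebraic integer coefficients, so that $B_u\,w_n = \sum_i \bigl(B_u\,h_i(n)\bigr)\alpha_i^n$ is an algebraic integer; being rational, it is a rational integer, whence $v_p(w_n) \geq -v_p(B_u)$ for every prime $p$ and every $n \geq 1$. Then, for $n \in \mathcal{A}_u$ with $n \geq 2$ (so that $u_n \neq 0$) and any prime $p \mid n$, we have $p \nmid u_n$, whence $0 = v_p(u_n) = v_p(n) + v_p(w_n) \geq v_p(n) - v_p(B_u)$; thus $v_p(n) \leq v_p(B_u)$ for every $p \mid n$, i.e.\ $n \mid B_u$, and $\mathcal{A}_u$ is contained in the finite set of divisors of $B_u$. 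The step I expect to require genuine care is precisely this last one: even under \eqref{equ:gizero} the quotients $u_n / n$ need not be integers, so the naive reasoning ``$n \mid u_n$, hence $\gcd(n, u_n) = n$'' is unavailable, and the constant $B_u$ has to be brought in to control the denominators of the $w_n$; everything else is routine linear-recurrence bookkeeping.
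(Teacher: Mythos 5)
Your proof is correct and follows essentially the same route as the paper: the equivalence via uniqueness of the generalized power sum representation, and finiteness via the observation that $B_u u_n/n$ is simultaneously rational and an algebraic integer, hence an integer, so that $\gcd(n,u_n)=1$ forces $n \mid B_u$. Your valuation-theoretic phrasing of that last divisibility step is just a reformulation of the paper's argument, and your fleshed-out treatment of the ``if'' direction (writing $g_i(X)=X h_i(X)$ and exhibiting the characteristic polynomial $\bigl(\prod_i (X-\alpha_i)\bigr)^D$) is a correct expansion of what the paper leaves implicit.
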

\begin{proof}
The first part of the lemma follows immediately from the fact that any linear recurrence can be written as a generalized power sum like (\ref{equ:genpowsum}) in a unique way (assuming the roots $\alpha_1, \ldots, \alpha_r$ are distinct, and up to the order of the addends).
For the second part, if (\ref{equ:gizero}) holds then for all positive integer $n$ we have that
\begin{equation*}
\frac{B_u u_n}{n} = \sum_{i = 1}^r \frac{B_u g_i(n)}{n} \, \alpha_i^n 
\end{equation*}
is both a rational number and an algebraic integer, hence it is an integer.
Therefore, $n \mid B_u u_n$, and so $\gcd(n, u_n) = 1$ only if $n \mid B_u$, which in turn implies that $\mathcal{A}_u$ is finite.
\end{proof}

For the rest of this section, we assume that $(u_n)_{n \geq 0}$ is nondegenerate and that $f_u$ has only simple roots, hence, in particular, $r = k$.
We write $\Delta_u$ for the discriminant of the polynomial $f_u$, and we recall that $\Delta_u$ is a nonzero integer.
If $k \geq 2$, then for all integers $x_1, \ldots, x_k$ we set
\begin{equation*}
D_u(x_1, \ldots, x_k) := \det(\alpha_i^{x_j})_{1 \leq i, j \leq k} ,
\end{equation*}
and for any prime number $p$ not dividing $a_k$ we define $T_u(p)$ as the greatest integer $T \geq 0$ such that $p$ does not divide
\begin{equation*}
\prod_{1 \leq x_2, \ldots, x_k \leq T} \max\!\left\{1, |N_\mathbf{K} (D_u(0, x_2, \ldots, x_k))| \right\} ,
\end{equation*}
where $N_\mathbf{K}(\alpha)$ denotes the norm of $\alpha \in \mathbf{K}$ over $\mathbf{Q}$, and the empty product is equal to $1$.
It is known that such $T$ exists \cite[p.~88]{MR1990179}.
If $k = 1$, then we set $T_u(p) := +\infty$ for all prime numbers $p$ not dividing $a_1$.
Note that $T_u(p) = 0$ if and only if $k = 2$ and $p$ divides $\Delta_u$.

Finally, for all $\gamma \in {]0,1[}$, we define
\begin{equation*}
\mathcal{P}_{u,\gamma} := \{p : p \nmid a_k, \; T_u(p) < p^\gamma \} .
\end{equation*}

We are ready to state two important lemmas regarding $T_u(p)$~\cite[Lemma~2.1, Lemma~2.2]{MR2928495}.

\begin{lem}\label{lem:Pugamma}
For all $\gamma \in {]0,1[}$ and $x \geq 2^{1/ \gamma}$ we have
\begin{equation*}
\#\mathcal{P}_{u,\gamma}(x) \ll_u \frac{x^{k\gamma}}{\gamma \log x} .
\end{equation*}
\end{lem}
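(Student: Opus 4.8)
The plan is to estimate, for a suitable cut-off $T$, a single positive integer $\mathcal{D}_T$ that is divisible by every prime $p$ with $T_u(p) < T$; concretely, put
\[
\mathcal{D}_T := \prod_{1 \,\leq\, x_2, \ldots, x_k \,\leq\, T} \max\!\left\{1,\, |N_\mathbf{K}(D_u(0, x_2, \ldots, x_k))| \right\} .
\]
Two cases are trivial and will be disposed of first. If $k = 1$ then $T_u(p) = +\infty$ for all admissible $p$, so $\mathcal{P}_{u,\gamma} = \emptyset$ and the bound is vacuous. If $k\gamma \geq 1$ then $x^{k\gamma}/(\gamma\log x) \geq x/\log x$ (using $\gamma < 1 \leq k\gamma$ and $x > 1$), and the bound follows from $\#\mathcal{P}_{u,\gamma}(x) \leq \pi(x) \ll x/\log x$. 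Hence from now on I assume $k \geq 2$ and $k\gamma < 1$, and I set $T := \lceil x^\gamma \rceil$, so that $2 \leq x^\gamma \leq T \leq 2x^\gamma$.

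For the key step, note that since $f_u$ is monic with integer coefficients its roots $\alpha_1, \ldots, \alpha_k$ are algebraic integers, so each $D_u(0, x_2, \ldots, x_k) = \det(\alpha_i^{x_j})$ is an algebraic integer, each $N_\mathbf{K}(D_u(0, x_2, \ldots, x_k))$ is a rational integer, and thus $\mathcal{D}_T$ is a positive integer. If $p \in \mathcal{P}_{u,\gamma}(x)$, then $p \nmid a_k$ and $T_u(p) < p^\gamma \leq x^\gamma \leq T$; since $T_u(p)$ is by definition the greatest $T'$ for which $p$ does not divide the partial product over $1 \leq x_2, \ldots, x_k \leq T'$, it follows that $p \mid \mathcal{D}_T$. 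As the primes in $\mathcal{P}_{u,\gamma}(x)$ are pairwise distinct and all divide the fixed integer $\mathcal{D}_T$, their product divides $\mathcal{D}_T$, and therefore
\[
\sum_{p \,\in\, \mathcal{P}_{u,\gamma}(x)} \log p \;\leq\; \log \mathcal{D}_T .
\]

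Next I would bound $\log \mathcal{D}_T$ from above. Fix a constant $A \geq 1$, depending only on $(u_n)_{n \geq 0}$, such that $|\sigma(\alpha_i)| \leq A$ for every $i$ and every embedding $\sigma \colon \mathbf{K} \hookrightarrow \mathbf{C}$. For each such $\sigma$ and each tuple, $\sigma(D_u(0, x_2, \ldots, x_k)) = \det(\sigma(\alpha_i)^{x_j})$ is the determinant of a $k \times k$ matrix whose entries all have absolute value at most $A^T$, so expanding the determinant gives $|\sigma(D_u(0, x_2, \ldots, x_k))| \leq k!\, A^{kT}$; multiplying over the $[\mathbf{K} : \mathbf{Q}]$ embeddings yields $|N_\mathbf{K}(D_u(0, x_2, \ldots, x_k))| \leq (k!\, A^{kT})^{[\mathbf{K} : \mathbf{Q}]}$, and hence $\log\max\{1,\, |N_\mathbf{K}(D_u(0, x_2, \ldots, x_k))|\} \ll_u T$. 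Since there are $T^{k-1}$ tuples $(x_2, \ldots, x_k) \in \{1, \ldots, T\}^{k-1}$, we get $\log \mathcal{D}_T \ll_u T^k \ll_u x^{k\gamma}$.

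Finally I would split $\mathcal{P}_{u,\gamma}(x)$ at $x^{k\gamma}$. Since $x \geq 2^{1/\gamma} \geq 2^{1/(k\gamma)}$ we have $x^{k\gamma} \geq 2$, so by Chebyshev's estimate the number of primes not exceeding $x^{k\gamma}$ is $\ll x^{k\gamma}/\log(x^{k\gamma}) = x^{k\gamma}/(k\gamma\log x) \leq x^{k\gamma}/(\gamma\log x)$. On the other hand, every prime $p \in \mathcal{P}_{u,\gamma}(x)$ with $p > x^{k\gamma}$ satisfies $\log p > k\gamma\log x$, so the number of such primes is at most $(k\gamma\log x)^{-1}\sum_{p \in \mathcal{P}_{u,\gamma}(x)}\log p \ll_u x^{k\gamma}/(\gamma\log x)$ by the displayed bound. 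Adding the two contributions gives $\#\mathcal{P}_{u,\gamma}(x) \ll_u x^{k\gamma}/(\gamma\log x)$, as claimed. I do not expect a genuine obstacle here: this is a routine counting argument, and the only points demanding care are verifying that $\mathcal{D}_T$ is a positive integer divisible by all the relevant primes — which rests on the inequality $T > T_u(p)$ — and keeping track of the exponent $k\gamma$ and of the constant $A$ throughout the size estimates.
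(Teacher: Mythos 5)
Your proposal is correct. Note that the paper does not prove this statement at all: it is quoted verbatim from \cite[Lemma~2.1]{MR2928495} (with the remark that the case $k=1$ is trivially checked), so there is no internal proof to compare against. Your self-contained argument is essentially the standard one behind that cited lemma: every $p \in \mathcal{P}_{u,\gamma}(x)$ divides the integer $\mathcal{D}_T$ with $T \approx x^\gamma$ because $T_u(p) < p^\gamma \leq T$ and the defining products are nested, the size estimate $\log \mathcal{D}_T \ll_u T^k \ll_u x^{k\gamma}$ follows from the trivial determinant/norm bound, and splitting the primes at $x^{k\gamma}$ (Chebyshev for the small ones, $\log p > k\gamma \log x$ for the large ones) gives exactly the stated bound $\ll_u x^{k\gamma}/(\gamma \log x)$; the preliminary reductions to $k \geq 2$ and $k\gamma < 1$ are handled correctly as well.
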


\begin{lem}\label{lem:upmcong}
Assume that $p$ is a prime number not dividing $a_k B_u \Delta_u$ and relatively prime with at least one term of $(u_n)_{n \geq 0}$.
Then, for all $x \geq 1$, the number of positive integers $m \leq x$ such that $u_{pm} \equiv 0 \pmod p$ is
\begin{equation*}
O_k\!\left(\frac{x}{T_u(p)} + 1\right) .
\end{equation*}
\end{lem}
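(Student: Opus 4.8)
The plan is to work modulo a prime ideal $\mathfrak{p}$ of the ring of algebraic integers $\mathcal{O}_\mathbf{K}$ of $\mathbf{K}$ lying above $p$. Since $f_u$ has only simple roots, the $g_i$ in~(\ref{equ:genpowsum}) are constants, and the three conditions on $p$ are used as follows: $p \nmid B_u$ guarantees that each $g_i$ reduces modulo $\mathfrak{p}$; $p \nmid a_k$ guarantees that each reduction $\bar\alpha_i$ is a unit of $\mathbf{F} := \mathcal{O}_\mathbf{K}/\mathfrak{p}$; and $p \nmid \Delta_u$ guarantees that the $\bar\alpha_i$ are pairwise distinct. Hence $p \mid u_{pm}$ is equivalent to $\sum_{i=1}^{k} \bar g_i\,\bar\alpha_i^{\,pm} = 0$ in $\mathbf{F}$, and the $\bar g_i$ are not all zero --- otherwise $p \mid u_n$ for every $n$, contradicting the assumption that $p$ is relatively prime to some term of $(u_n)_{n \ge 0}$.

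It then suffices to find a constant $C_k$, depending only on $k$, such that every block of $T_u(p)$ consecutive positive integers contains at most $C_k$ integers $m$ with $p \mid u_{pm}$; indeed, cutting $[1,x]$ into at most $x/T_u(p)+1$ such blocks would then give the bound. The case $k=1$ is immediate (then $T_u(p)=+\infty$, and $p \mid u_{pm}$ would force $p \mid u_0$, hence $p \mid u_n$ for all $n$, which is impossible, so no such $m$ exists), and $T_u(p) \le 1$ is trivial; so I would assume $k \ge 2$ and $T_u(p) \ge 2$. The key tool is a Frobenius identity: since $\mathbf{F}$ has characteristic $p$, for all integers $x_2,\dots,x_k$ one has $\det\!\big(\bar\alpha_i^{\,p x_j}\big)_{1 \le i, j \le k} = \overline{D_u(0,x_2,\dots,x_k)}^{\,p}$, where $x_1 := 0$. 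Consequently, whenever $D_u(0,x_2,\dots,x_k) \ne 0$ with all $x_j \in \{1,\dots,T_u(p)\}$, the very definition of $T_u(p)$ forces $p \nmid N_\mathbf{K}(D_u(0,x_2,\dots,x_k))$, hence $\mathfrak{p} \nmid D_u(0,x_2,\dots,x_k)$, and therefore the matrix $\big(\bar\alpha_i^{\,p x_j}\big)$ is invertible over $\mathbf{F}$.

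Next I would fix a block and let $n_1 < \cdots < n_N$ enumerate the integers $n$ in it with $p \mid u_{pn}$, the aim being $N \le C_k$. Put $y_l := n_l - n_1 \le T_u(p)-1$ and consider the vectors $\mathbf{v}_l := (\alpha_1^{y_l},\dots,\alpha_k^{y_l}) \in \mathbf{K}^k$. If $\mathbf{v}_1,\dots,\mathbf{v}_N$ span $\mathbf{K}^k$, pick $l_1 < \cdots < l_k$ with $\mathbf{v}_{l_1},\dots,\mathbf{v}_{l_k}$ linearly independent and set $z_j := y_{l_j} - y_{l_1}$, so that $0 = z_1 < z_2 < \cdots < z_k \le T_u(p)-1$ and $D_u(0,z_2,\dots,z_k) = \big(\prod_i \alpha_i^{y_{l_1}}\big)^{-1}\det(\alpha_i^{y_{l_j}})_{i,j} \ne 0$. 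Then the relations $\sum_i \big(\bar g_i\,\bar\alpha_i^{\,p n_{l_1}}\big)\,\bar\alpha_i^{\,p z_j} = 0$ for $j = 1,\dots,k$, whose coefficient matrix is the invertible $\big(\bar\alpha_i^{\,p z_j}\big)$, force $\bar g_i = 0$ for all $i$ --- a contradiction. So in this case $N \le k-1$.

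The one remaining case is that $\mathbf{v}_1,\dots,\mathbf{v}_N$ do not span $\mathbf{K}^k$: choosing $(c_1,\dots,c_k) \in \mathbf{K}^k \setminus \{0\}$ with $\sum_i c_i \alpha_i^{y_l} = 0$ for every $l$, the linear recurrence $r_m := \sum_{i=1}^{k} c_i \alpha_i^m$ is not identically zero, its characteristic roots form a nonempty subset of $\{\alpha_1,\dots,\alpha_k\}$ --- so $r$ is nondegenerate of order at most $k$ --- and it vanishes at the $N$ distinct integers $y_1,\dots,y_N$. By the theorem of W.~M.~Schmidt bounding the number of zeros of a nondegenerate linear recurrence purely in terms of its order, $N$ is at most a constant depending only on $k$. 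Taking $C_k$ to be the maximum of $k-1$ and these bounds completes the proof. I expect this last case to be the only genuine difficulty: the reduced sequence over the finite field $\mathbf{F}$ could a priori vanish at very many points of a long block, and it is exactly Schmidt's zero-multiplicity estimate --- applied in characteristic zero to the auxiliary recurrence $r$ over $\mathbf{K}$ --- that prevents this, while the rest is linear algebra together with the Frobenius identity.
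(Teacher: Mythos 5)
Your proof is correct. Note that the paper itself does not prove Lemma~\ref{lem:upmcong}: it is quoted from Alba Gonz\'alez--Luca--Pomerance--Shparlinski \cite[Lemma~2.2]{MR2928495} (with the remark that the case $k=1$ is easy), and your argument is essentially a reconstruction of the proof given there --- reduction modulo a prime ideal $\mathfrak{p}$ above $p$ (using $p \nmid a_k B_u \Delta_u$ and the coprimality hypothesis to get $\bar g_i$ not all zero), the Frobenius identity $\det(\bar\alpha_i^{\,px_j}) = \overline{D_u(0,x_2,\dots,x_k)}^{\,p}$ combined with the definition of $T_u(p)$ to force invertibility of the relevant matrix over $\mathcal{O}_\mathbf{K}/\mathfrak{p}$, and a characteristic-zero zero-multiplicity bound (Schmidt, or Evertse--Schlickewei--Schmidt in this constant-coefficient case) to handle the configuration in which the vectors $(\alpha_1^{y_l},\dots,\alpha_k^{y_l})$ do not span $\mathbf{K}^k$; this last input is exactly what makes the implied constant depend only on $k$ and cannot be dispensed with for $k \geq 3$. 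One small slip in the write-up: in the spanning case your contradiction shows that this case simply cannot occur, so the conclusion ``$N \le k-1$'' is a non sequitur; it is harmless, since you take $C_k$ to be a maximum and in reality every block falls under the non-spanning case, but the cleaner statement is that the $\mathbf{v}_l$ never span $\mathbf{K}^k$ and Schmidt's bound then applies to every block.
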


Actually, in~\cite{MR2928495} both Lemma~\ref{lem:Pugamma} and Lemma~\ref{lem:upmcong} were proved only for $k \geq 2$.
However, one can easily check that they are true also for $k = 1$.

\section{Proof of Theorem~\ref{thm:main}}

For all integers $n \geq 0$, define
\begin{equation*}
v_n := B_u \sum_{i = 1}^r \frac{g_i(n) - g_i(0)}{n} \;\! \alpha_i^n \quad\text{and}\quad w_n := B_u \sum_{i = 1}^r g_i(0) \;\! \alpha_i^n .
\end{equation*}
Note that both $(v_n)_{n \geq 0}$ and $(w_n)_{n \geq 0}$ are linear recurrences of algebraic integers, and that the characteristic polynomial of $(w_n)_{n \geq 0}$ has only simple roots.

Let $\mathcal{G}$ be the Galois group of $\mathbf{K}$ over $\mathbf{Q}$.
Since $u_n$ is an integer, for any $\sigma \in \mathcal{G}$ we have that
\begin{align}\label{equ:nvnwn}
nv_n + w_n = B_u u_n = \sigma(B_u u_n) = \sigma(n v_n + w_n) = n\sigma(v_n) + \sigma(w_n) ,
\end{align}
for all integers $n \geq 0$.
In (\ref{equ:nvnwn}) note that both $n\sigma(v_n)$ and $\sigma(w_n)$ are linear recurrences, and the first is a multiple of $n$, while the characteristic polynomial of the second has only simple roots. 
Since the expression of a linear recurrence as a generalized power sum is unique, from (\ref{equ:nvnwn}) we get that $w_n = \sigma(w_n)$ for any $\sigma \in \mathcal{G}$, hence $w_n$ is an integer.

Thanks to Lemma~\ref{lem:unovern}, we know that $(w_n)_{n \geq 0}$ is identically zero if and only if $(u_n / n)_{n \geq 1}$ is a linear recurrence, and in such a case $\mathcal{A}_u$ is finite, so that the claim of Theorem~\ref{thm:main} is obvious.
Hence, we assume that $(w_n)_{n \geq 0}$ is not identically zero.

For the sake of convenience, put $\mathcal{C}_u := \mathbf{N} \setminus \mathcal{A}_u$.
Thus we have to prove that the asymptotic density of $\mathcal{C}_u$ exists and is less than $1$.
For each $y > 0$, we split $\mathcal{C}_u$ into two subsets:
\begin{align*}
\mathcal{C}_{u,y}^- &:= \{n \in \mathcal{C}_u : p \mid \gcd(n, u_n) \text{ for some } p \leq y\} , \\
\mathcal{C}_{u,y}^+ &:= \mathcal{C}_u \setminus \mathcal{C}_{u,y}^- .
\end{align*}
It is well known that $(u_n)_{n \geq 0}$ is definitively periodic modulo $p$, for any prime number $p$.
Therefore, it is easy to see that $\mathcal{C}_{u,y}^-$ is an union of finitely many arithmetic progressions and a finite subset of $\mathbf{N}$.
In particular, $\mathcal{C}_{u,y}^-$ has an asymptotic density.
If we put $\delta_y := \mathbf{d}(\mathcal{C}_{u,y}^-)$, then it is clear that $\delta_y$ is a bounded nondecreasing function of $y$, hence the limit 
\begin{equation}\label{equ:limdelta}
\delta := \lim_{y \to +\infty} \delta_y
\end{equation}
exists finite.
We shall prove that $\mathcal{C}_u$ has asymptotic density $\delta$.
Hereafter, all the implied constants may depend on $(u_n)_{n \geq 0}$ and $k$.
If $n \in \mathcal{C}_{u, y}^+(x)$ then there exists a prime $p > y$ such that $p \mid n$ and $p \mid u_n$.
Furthermore, $B_u u_n = n v_n + w_n$ implies that $p \mid w_n$.
Hence, we can write $n = pm$ for some positive integer $m \leq x / p$ such that $w_{pm} \equiv 0 \pmod p$.
For sufficiently large $y$, we have that $p$ does not divide $f_w(0) B_w \Delta_w$ and is coprime with at least one term of $(w_s)_{s \geq 0}$, since $(w_s)_{s \geq 0}$ is not identically zero.

Therefore, by applying Lemma~\ref{lem:upmcong} to $(w_s)_{s \geq 0}$, we get that the number of possible values of $m$ is at most
\begin{equation*}
O\!\left(\frac{x}{pT_w(p)} + 1\right) .
\end{equation*}
As a consequence,
\begin{equation}\label{equ:bound1}
\#\mathcal{C}_{u, y}^+(x) \ll \sum_{y < p \leq x} \left(\frac{x}{pT_w(p)} + 1\right) \ll x \cdot \left(\sum_{p > y} \frac1{pT_w(p)} + \frac1{\log x}\right) ,
\end{equation}
where we also used the Chebyshev's bound for the number of primes not exceeding $x$.
Setting $\gamma := 1/(k + 1)$, by partial summation and Lemma~\ref{lem:Pugamma}, we have
\begin{equation}\label{equ:bound2}
\sum_{\substack{p > y \\ p \in \mathcal{P}_{w,\gamma}}} \frac1{pT_w(p)} \leq \sum_{\substack{p > y \\ p \in \mathcal{P}_{w,\gamma}}} \frac1{p} = \left[\frac{\#\mathcal{P}_{w,\gamma}(t)}{t}\right]_{t = y}^{+\infty} + \int_y^{+\infty} \frac{\#\mathcal{P}_{w,\gamma}(t)}{t^2}\mathrm{d}t \ll \frac1{y^{1 - k\gamma}} = \frac1{y^\gamma}.
\end{equation}
On the other hand, 
\begin{equation}\label{equ:bound3}
\sum_{\substack{p > y \\ p \notin \mathcal{P}_{w,\gamma}}} \frac1{pT_w(p)} \leq \sum_{\substack{p > y \\ p \notin \mathcal{P}_{w,\gamma}}} \frac1{p^{1 + \gamma}} \ll \int_y^{+\infty} \frac{\mathrm{d}t}{t^{1 + \gamma}} \ll \frac1{y^{\gamma}}
\end{equation}
Thus, putting together (\ref{equ:bound1}), (\ref{equ:bound2}), and (\ref{equ:bound3}), we obtain
\begin{equation*}
\frac{\#\mathcal{C}_{u, y}^+(x)}{x} \ll \frac1{y^\gamma} + \frac1{\log x} ,
\end{equation*}
so that
\begin{equation}\label{equ:limsup}
\limsup_{x \to +\infty} \left|\frac{\#\mathcal{C}_u(x)}{x} - \delta_y\right| = \limsup_{x \to +\infty} \left|\frac{\#\mathcal{C}_u(x)}{x} - \frac{\#\mathcal{C}_{u,y}^-(x)}{x}\right| = \limsup_{x \to +\infty} \frac{\#\mathcal{C}_{u,y}^+(x)}{x} \ll \frac1{y^\gamma} ,
\end{equation}
hence, by letting $y \to +\infty$ in (\ref{equ:limsup}) and by using (\ref{equ:limdelta}), we get that $\mathcal{C}_u$ has asymptotic density $\delta$.

It remains only to prove that $\delta < 1$.
Clearly,
\begin{equation*}
\mathcal{C}_{u,y}^- \subseteq \{n \in \mathbf{N} : p \mid n \text{ for some } p \leq y\} ,
\end{equation*}
so that, by Eratosthenes' sieve and Mertens' third theorem~\cite[Ch.~I.1, Theorem~11]{Ten95}, we have
\begin{equation}\label{equ:limsupCminus}
\limsup_{x \to +\infty} \frac{\#\mathcal{C}_{u,y}^-(x)}{x} \leq 1 - \prod_{p \leq y}\left(1 - \frac1{p}\right) \leq 1 - \frac{c_1}{\log y} ,
\end{equation}
for all $y \geq 2$, where $c_1 > 0$ is an absolute constant.
Hence, putting together (\ref{equ:limsupCminus}) and the last part of (\ref{equ:limsup}), we get
\begin{equation}\label{equ:lastlimsup}
\delta = \lim_{x \to +\infty} \frac{\#\mathcal{C}_u(x)}{x} \leq \limsup_{x \to +\infty} \frac{\#\mathcal{C}_{u,y}^-(x)}{x} + \limsup_{x \to +\infty} \frac{\#\mathcal{C}_{u,y}^+(x)}{x} \leq 1 - \left(\frac{c_1}{\log y} - \frac{c_2}{y^{\gamma}}\right) ,
\end{equation}
for all $y \geq 2$, where $c_2 > 0$ is an absolute constant.

Finally, picking a sufficiently large $y$, depending on $c_1$ and $c_2$, the bound (\ref{equ:lastlimsup}) yields $\delta < 1$.
The proof of Theorem~\ref{thm:main} is complete.

\bibliographystyle{amsplain}

\begin{thebibliography}{10}

\bibitem{MR2928495}
J.~J. Alba~Gonz\'alez, F.~Luca, C.~Pomerance, and I.~E. Shparlinski, \emph{On
  numbers {$n$} dividing the {$n$}th term of a linear recurrence}, Proc. Edinb.
  Math. Soc. (2) \textbf{55} (2012), no.~2, 271--289.

\bibitem{MR1131414}
R.~Andr\'e-Jeannin, \emph{Divisibility of generalized {F}ibonacci and {L}ucas
  numbers by their subscripts}, Fibonacci Quart. \textbf{29} (1991), no.~4,
  364--366.

\bibitem{MR1918678}
P.~Corvaja and U.~Zannier, \emph{Finiteness of integral values for the ratio of
  two linear recurrences}, Invent. Math. \textbf{149} (2002), no.~2, 431--451.

\bibitem{MR1990179}
G.~Everest, A.~van~der Poorten, I.~Shparlinski, and T.~Ward, \emph{Recurrence
  sequences}, Mathematical Surveys and Monographs, vol. 104, American
  Mathematical Society, Providence, RI, 2003.

\bibitem{LS_preprint}
P.~Leonetti and C.~Sanna, \emph{On the greatest common divisor of $n$ and the
  $n$th {F}ibonacci number}, \url{https://arxiv.org/abs/1704.00151}.

\bibitem{MR3409327}
F.~Luca and E.~Tron, \emph{The distribution of self-{F}ibonacci divisors},
  Advances in the theory of numbers, Fields Inst. Commun., vol.~77, Fields
  Inst. Res. Math. Sci., Toronto, ON, 2015, pp.~149--158.

\bibitem{San_preprint}
C.~Sanna, \emph{Distribution of integral values for the ratio of two linear
  recurrences}, \url{https://arxiv.org/abs/1703.10047}.

\bibitem{MR3606950}
C.~Sanna, \emph{On numbers {$n$} dividing the {$n$}th term of a {L}ucas
  sequence}, Int. J. Number Theory \textbf{13} (2017), no.~3, 725--734.

\bibitem{MR1271392}
L.~Somer, \emph{Divisibility of terms in {L}ucas sequences by their
  subscripts}, Applications of {F}ibonacci numbers, {V}ol. 5 ({S}t. {A}ndrews,
  1992), Kluwer Acad. Publ., Dordrecht, 1993, pp.~515--525.

\bibitem{Ten95}
G.~Tenenbaum, \emph{Introduction to analytic and probabilistic number theory},
  Cambridge Studies in Advanced Mathematics, vol.~46, Cambridge University
  Press, Cambridge, 1995.

\end{thebibliography}

\end{document}